\numberwithin{equation}{section}
\newcommand{\E}{\mathbb{E}}
\newcommand{\Q}{\mathbb{Q}}
\newcommand{\cZ}{C_{{\rm Sparse}}}
\newcommand{\cZS}{C_{{\rm Embed}}}
\newcommand{\tauE}{\tau_{{\rm Embed}}}
\newcommand{\tauS}{\tau_{{\rm Sparse}}}
\newcommand{\N}{\mathbb{N}}
\newcommand{\mmp}{\mathbb{P}}
\newcommand{\m}{\mathrm{m}}
\newcommand{\Var}{{\rm Var}\,}
\DeclareMathOperator{\1}{\mathbbm{1}}
\newtheorem{thm}{Theorem}[section]
\newtheorem{lemma}[thm]{Lemma}
\newtheorem{cor}[thm]{Corollary}
\newtheorem{assertion}[thm]{Proposition}
\theoremstyle{definition}
\theoremstyle{remark}
\newtheorem{rem}[thm]{Remark}
\begin{document}

\title{Critical branching processes in a sparse random environment}\date{}

\author{Dariusz Buraczewski}
\address{Dariusz Buraczewski, Mathematical Institute, University of Wroclaw, 50-384 Wroclaw, Po\-land}
\email{dariusz.buraczewski@math.uni.wroc.pl}

\author{Congzao Dong}
\address{Congzao Dong, School of Mathematics and Statistics, Xidian University, 710126 Xi'an, China}
\email{czdong@xidian.edu.cn}

\author{Alexander Iksanov}
\address{Alexander Iksanov, Faculty of Computer Science and Cybernetics, Taras Shev\-chen\-ko National University of Kyiv, 01601 Kyiv, Ukraine}
\email{iksan@univ.kiev.ua}

\author{Alexander Marynych}
\address{Alexander Marynych, Faculty of Computer Science and Cybernetics, Taras Shev\-chen\-ko National University of Kyiv, 01601 Kyiv, Ukraine}
\email{marynych@unicyb.kiev.ua}

\begin{abstract}
We introduce a branching process in a sparse random environment as an intermediate model between a Galton--Watson process and a branching process in a random environment. In the critical case we investigate 
the survival probability and prove Yaglom-type limit theorems,
that is, limit theorems for the size of population conditioned on the survival event. 
\end{abstract}

\keywords{branching process, random environment, limit theorem}

\subjclass[2020]{Primary: 60J80; secondary: 60F05}

\maketitle

\section{Introduction and main results}

The branching process is a random process starting with one individual, the initial ancestor, which 
produces offspring according to some random rule. The collection
of offspring  constitutes the first generation. Each individual of the first generation gives birth to 
a random number of children with the same offspring 
distribution as for the initial ancestor. The numbers of offspring of different individuals (including the initial ancestor) are independent. 
This process continues forever or until the population dies out. An interesting problem is the behavior of the long-time evolution
of the process. Plainly, it depends on a particular rule that regulates giving birth to offspring.
In the simplest case, when the offspring distribution 
is the same for all generations, 
the branching process is called the Galton--Watson process. We refer to \cite{athreya:ney} for numerous results concerning, for instance, 
long-term survival or extinction of such a process, the growth rate of
the population, fluctuations of population sizes. Thanks to a simple 
tree structure, not only does the Galton--Watson process find 
numerous applications 
as a model of biological reproduction processes, but also in many other fields including 
computer science and physics.

The homogeneity of the Galton--Watson process 
reduces its applicability.
In some 
cases it may happen that the population evolution conditions change randomly over time.
We intend to study here branching processes in a randomly perturbed environment, in which homogeneity of the 
environment is modified on a sparse subset of $\N$.
To give a precise definition, let
$\mu$ be a fixed probability measure on $\N_0$ and 
$\Q = ((d_k,\nu_k))_{k\ge 1}$ 
a sequence of independent copies of a random vector $(d,\nu)$, where $d$ is a positive integer-valued random variable 
and $\nu$ is a random measure on $\N_0$ independent of $d$.
First we choose 
a subset of integers marked by the positions of a standard random walk $(S_k)_{k\geq 0}$ defined by 
$$
S_0=0,\quad S_k = \sum_{j=1}^k d_j,\quad k\in\mathbb{N},
$$
and then 
we impose random  measures at the marked 
sites.
The branching process in sparse random environment $\Q$ (BPSRE) is formally defined as follows:
$$
Z_0=1,\quad Z_{n+1} {=} \sum_{j=1}^{Z_n} \xi^{(n)}_j,\quad n\in\mathbb{N}_0:=\{0,1,2,\ldots\},
$$
where, if $n = S_k$ for some $k\in\mathbb{N}$, then, given $\Q$, $\xi^{(n)}_j$ are independent random variables 
with the common distribution $\nu_{k+1}$, which are also independent of $Z_n$.
Otherwise, if $n \notin \{S_0,S_1,S_2,\ldots\}$, then  $\xi^{(n)}_j$ are independent random variables with the common distribution $\mu$, which are also independent of $Z_n$. 
The process $(Z_n)_{n\ge 0}$ 
behaves like 
the Galton--Watson
process, with the exception of 
some randomly chosen generations in which 
the 
 offspring distribution 
is random.

The 
BPSRE that we are going to investigate 
here is an intermediate model between the branching process in random environment (BPRE) introduced by Smith and Wilkinson \cite{smith:wilkinson} and the Galton--Watson process.
The BPRE is a population growth process, in which the 
individuals reproduce independently of each other with the 
offspring distribution 
picked randomly at each generation. More precisely,
let $\nu$ be a random measure on the set of positive 
integers $\N$. Then a sequence  $(\nu_n )_{n \geq 1}$ of independent copies of $\nu$ can be interpreted as 
a random environment. The BPRE is then the sequence $Z'=(Z'_n)_{n \geq 0}$ defined by
the recursive formula 
$Z'_{n+1} {=} \sum_{k=1}^{Z'_n} \xi^{(n)}_k$,
where, given $(\nu_n )_{n \geq 1}$, $(\xi^{(n)}_k)_{k\geq 1}$ are independent identically distributed (iid) and independent of $Z'_n$ with the common distribution $\nu_{n+1}$.
We refer to the 
recent monograph by Kersting and Vatutin \cite{kersting2017discrete} for an overview of fundamental
properties of this process.

We intend to describe how the additional randomness of the environment affects 
the behavior of the BPSRE. To this end, 
we focus on Yaglom-type results. For the Galton--Watson process in the critical case, that is, 
when the expected number of offspring is $1$ (see (A2) below), it is known
that the probability of survival up to the generation $n$ is of the order $1/n$ and the population size conditioned to the survival set
converges weakly 
to an 
exponential distribution 
(section 9 in \cite{athreya:ney}). In contrast, in the critical case for the BPRE, that is, 
when the expectation of the logarithm of the number of offspring is $0$ (see (A1)), the probability of survival up to the generation $n$ is asymptotically $1/\sqrt n$,
and the process conditioned to the survival event converges weakly to a 
Rayleigh distribution. We prove below in Theorems \ref{thm:tail} and \ref{thm:flt} that, although the environment is random 
on a sparse subset only, the behavior the BPSRE reminds that 
of a BPRE.

To close the introduction, we 
mention that 
closely related 
random walks in a sparse random environment,
which is an intermediate model between the simple random walk and the random walk in a random environment,
have 
been recently investigated 
in~\cite{sparse:rwre:spa,sparse:rwre:ejp,buraczewski2023weak}.

\medskip

\subsection{Notation and assumptions}
Given a deterministic or random probability measure $\theta$ on $\mathbb{N}_0$, 
define the generating function
$$
f_{\theta}(s) = \sum_{j=0}^\infty s^j \theta(\{j\}), 
\quad |s|\leq 1.
$$
Denote by
$$A_{\theta}:= f_{\theta}^{\prime}(1) = \sum_{j=1}^\infty j \theta(\{j\})$$
its mean and by
$$
\sigma_{\theta}:= \frac{f''_{\theta}(1)}{(f'_{\theta}(1))^2} = \frac{1}{A^2_{\theta}}  \sum_{j=2 
}^\infty j(j-1)\theta(\{j\})
$$
its normalized second factorial moment. We shall also use a 
standardized truncated second moment defined by
$$
\kappa(f_{\theta};a):=\frac{1}{A^2_{\theta}}\sum_{j=a}^{\infty}j^2\theta(\{j\}),\quad a\in\mathbb{N}_0.
$$
To simplify our notation we shall write, for $k\geq 1$, $A_k$ and $\sigma_k$ instead of $A_{\nu_k}$ and $\sigma_{\nu_k}$, respectively. Thus, in our setting $(A_k)_{k\geq 1}$ and $(\sigma_k)_{k\geq 1}$ are two (dependent) sequences of iid 
random variables. As usual, $x^+=\max (x,0)$ and $x^-=\max (-x,0)$ for $x\in\mathbb{R}$.

Throughout the paper we impose the following assumptions:
\begin{enumerate}
\item[(A1)] $\E \log A_1=0$, $\mathfrak{v}^2:=\Var (\log A_1)\in (0,\infty)$ and $\E (\log^{-} A_1)^4<\infty$;
\item[(A2)] $A_{\mu}=1$;
\item[(A3)] $\E d_1^{3/2}<\infty$ and we put $\m:=\E d_1$;
\item[(A4)] $\E(\log^+\kappa(f_{\nu};a))^4<\infty$ for some $a\in\mathbb{N}$.
\end{enumerate}

\subsection{Main results}

Let $\tauS\in (0,\infty]$ be the extinction time of $(Z_n)_{n\geq 0}$, that is,
$$
\tauS:=\inf\{k\geq 0:Z_k=0\}.
$$
The following observation is almost immediate. 
\begin{assertion}\label{prop:criticallity}
Under the assumptions (A1)-(A2), 
$\mmp\{\tauS<\infty\}=1$.
\end{assertion}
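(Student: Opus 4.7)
The plan is to look at the embedded process $Y_k := Z_{S_k}$ at the sparse sites and exploit the oscillation of the mean-zero random walk driven by $(\log A_j)_{j\ge 1}$.

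First, since $Z_n=0$ forces $Z_m=0$ for every $m\ge n$, the event $\{\tauS<\infty\}$ coincides with $\{Y_k=0 \text{ for some } k\in\mathbb{N}\}$, and the events $\{Y_k\ge 1\}$ form a decreasing sequence. It is therefore enough to prove that $\mathbb{P}\{Y_k\ge 1\text{ for all }k\}=0$.

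Next, I would compute the conditional mean of $Y_k$ given the environment. The transition from $Y_{k-1}$ to $Y_k$ consists of exactly one branching step at the sparse site $S_{k-1}$ (whose random mean is an iid copy of $A_\nu$) followed by $d_k-1$ Galton--Watson steps with offspring distribution $\mu$. Thanks to (A2) each such $\mu$-step preserves the mean, so
$$
\mathbb{E}[Y_k\mid\mathcal{Q}]=\prod_{j=1}^{k}A_j
$$
(after harmlessly relabelling the iid sequence of means). The conditional Markov inequality then yields
$$
\mathbb{P}\{Y_k\ge 1\mid\mathcal{Q}\}\le \exp\Bigl(\sum_{j=1}^{k}\log A_j\Bigr).
$$

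Finally, assumption (A1) tells us that $(\log A_j)_{j\ge 1}$ is iid, centered, and non-degenerate (since $\mathfrak{v}^2>0$), so by the classical oscillation theorem the random walk $\sum_{j=1}^{k}\log A_j$ satisfies $\liminf_k \sum_{j=1}^{k}\log A_j = -\infty$ almost surely. Along a random subsequence $k_\ell\to\infty$ the upper bound above thus tends to $0$, giving $\mathbb{P}\{Y_{k_\ell}\ge 1\mid\mathcal{Q}\}\to 0$ a.s. Monotonicity of the events $\{Y_k\ge 1\}$ then produces
$$
\mathbb{P}\{\tauS=\infty\mid\mathcal{Q}\}=\lim_{k\to\infty}\mathbb{P}\{Y_k\ge 1\mid\mathcal{Q}\}=0\quad\text{a.s.,}
$$
and taking unconditional expectation finishes the proof. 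No real obstacle arises; the only point demanding minor bookkeeping is the product formula for $\mathbb{E}[Y_k\mid\mathcal{Q}]$, where (A2) is essential. Assumptions (A3) and (A4) are not used at this stage, in line with the proposition being described as ``almost immediate''.
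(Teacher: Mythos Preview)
Your argument is correct and follows essentially the same route as the paper: reduce to the embedded process $Y_k=Z_{S_k}$, which is a critical BPRE, and use that a critical BPRE dies out almost surely. The only difference is presentational: the paper simply invokes the known extinction result for critical BPREs (recorded as~\eqref{eq:tau_S_def}), whereas you unpack that result explicitly via the conditional Markov bound $\mathbb{P}\{Y_k\ge 1\mid\mathcal{Q}\}\le\prod_{j\le k}A_j$ together with the oscillation of the mean-zero random walk $\sum_{j\le k}\log A_j$.
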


Our first main result is concerned with the tail behavior of $\mathbb{P}\{\tauS>n\}=\mathbb{P}\{Z_n>0\}$ as $n\to\infty$.

\begin{thm}\label{thm:tail}
Assume (A1)-(A4). Then there exists $\cZ\in (0,\infty)$ such that
\begin{equation}\label{eq:tail-behavior}
\lim_{n\to\infty}\sqrt{n}\mathbb{P}\{\tauS>n\}=\cZ.
\end{equation}
\end{thm}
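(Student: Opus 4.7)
The plan is to embed the BPSRE at the sparse environment times $S_k$ and reduce to the classical Yaglom-type theorem for critical branching processes in random environment. Define $W_k := Z_{S_k}$ for $k \geq 0$, so that $W_0 = 1$. Given the environment $\Q$, the transition from $W_k$ to $W_{k+1}$ consists of one generation governed by the random offspring law $\nu_{k+1}$ followed by $d_{k+1}-1$ generations governed by the fixed critical law $\mu$. Equivalently, $W_{k+1}$ is a sum of $W_k$ iid variables with a (random) offspring measure $\widetilde{\nu}_{k+1}$ that is a measurable function of the single pair $(d_{k+1},\nu_{k+1})$. Since the vectors $(d_k,\nu_k)$ are iid, so is the sequence $(\widetilde{\nu}_k)_{k\ge 1}$, which makes $(W_k)_{k\geq 0}$ an ordinary BPRE. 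Because $A_\mu = 1$, the mean offspring of $\widetilde{\nu}_{k+1}$ equals $A_{k+1}$, so (A1) guarantees that this embedded BPRE is critical with positive finite variance of the logarithmic mean.

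The next step is to invoke the classical Yaglom-type theorem for critical BPRE due to Afanasyev, Geiger, Kersting and Vatutin, which under appropriate moment conditions asserts the existence of a constant $\cZS \in (0,\infty)$ such that
\[
\mmp\{W_k > 0\} \sim \frac{\cZS}{\sqrt{k}}, \qquad k \to \infty.
\]
Verifying the hypotheses of that theorem requires translating assumptions (A1) and (A4), which are stated in terms of $A_1$ and $\nu$, into the corresponding bounds on the composed measure $\widetilde{\nu}_1$. Using $A_\mu = 1$, finiteness of $\sigma_\mu$, and assumption (A3) to control the $d_1-1$ iterations of $\mu$, this amounts to a direct computation with compositions of generating functions.

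To transfer the asymptotic from $(W_k)$ to $\tauS$, set $K(n) := \max\{k\geq 0 : S_k\leq n\}$. Monotonicity of the event $\{Z_m = 0\}$ in $m$ yields the sandwich
\[
\mmp\{W_{K(n)+1} > 0\} \leq \mmp\{\tauS > n\} \leq \mmp\{W_{K(n)} > 0\}.
\]
The strong law of large numbers gives $K(n)/n \to 1/\m$ almost surely, so both sides should be asymptotic to $\sqrt{\m}\,\cZS/\sqrt{n}$, yielding the theorem with $\cZ = \sqrt{\m}\,\cZS$.

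The main obstacle lies in this last step. The random index $K(n)$ is not independent of $(W_k)$, since both are determined by the environment $\Q$ through the sequence $(d_k)$. To upgrade the sandwich into a genuine asymptotic equivalence one needs a uniform version of the BPRE tail estimate, for instance $\sup_{k\geq 1}\sqrt{k}\,\mmp\{W_k > 0\} < \infty$, combined with quantitative concentration of $S_k$ around $\m k$ for which (A3) supplies the required moments. Extending (A4) from $\nu$ to the composed measure $\widetilde{\nu}$ is a second, more technical point, as normalised second factorial moments grow along compositions and must be estimated carefully, but this step is not where the essential probabilistic content sits.
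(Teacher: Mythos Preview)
Your proposal is correct and follows the same architecture as the paper: embed at the times $S_k$, verify that $(W_k)=(Z_{S_k})$ is a critical BPRE satisfying the hypotheses of the Afanasyev--Geiger--Kersting--Vatutin theorem (the paper carries out the transfer of (A4) from $\nu$ to $\widetilde\nu$ in a separate lemma, exactly the ``technical point'' you flag), obtain $\P\{W_k>0\}\sim\cZS/\sqrt{k}$, and then sandwich $\P\{\tauS>n\}$ between $\P\{W_{K(n)+1}>0\}$ and $\P\{W_{K(n)}>0\}$.

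The one substantive difference is how the sandwich is closed. You propose to combine the uniform bound $\sup_k\sqrt{k}\,\P\{W_k>0\}<\infty$ with Baum--Katz-type concentration $\P\{|K(n)-n/\m|>\varepsilon n\}=o(n^{-1/2})$, which (A3) indeed supplies; together with the monotonicity of $\{W_k>0\}$ in $k$ (which you already invoked for the sandwich itself) this yields the result by a direct $\varepsilon$-argument. The paper instead rewrites
\[
\P\{W_{\vartheta(n)-1}>0\}=\P\{\tauE\ge\vartheta(n)\}=\P\{S_{\tauE}>n\},\qquad
\P\{W_{\vartheta(n)}>0\}=\P\{S_{\tauE-1}>n\},
\]
turning both bounds into tails of a randomly stopped sum $S_N$ with $N$ having regularly varying tail of index $-1/2$, and then invokes Theorem~3.1 of Robert (2008) on tails of such sums; the side condition there, $n\P\{d>n\}=o(\P\{\tauE>n\})$, reduces to $\P\{d>n\}=o(n^{-3/2})$ and is exactly (A3). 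Your route is more elementary and self-contained; the paper's is shorter but imports a nontrivial external result. It is worth noting that the paper does use precisely your concentration-plus-monotonicity argument, but only later, in the proof of the functional limit theorem.
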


Our next result is a Yaglom-type functional limit theorem for the process $(Z_n)$. Recall that a Brownian meander, see~\cite{durrett1977weak}, is a stochastic process $(B_{+}(t))_{t\in [0,1]}$ defined as follows. Let $(B(t))_{t\in [0,1]}$ be a standard Brownian motion and $\zeta:=\sup\{t \in [0,1]:B(t)=0\}$ be its last visit to $0$ on $[0,1]$. Then
$$
B_{+}(t)=\frac{1}{\sqrt{1-\zeta}}|B(\zeta+t(1-\zeta))|,\quad t\in [0,1].
$$
\begin{thm}\label{thm:flt}
Assume (A1)-(A4). Then with $(B_{+}(t))_{t\in [0,1]}$ being a 
Brownian meander 
$$
{\rm Law}\left(\left(\frac{\log Z_{\lfloor nt\rfloor}}{\mathfrak{v}\sqrt{\m^{-1}n}}\right)_{t\in [0,1]}\,\Big|\, Z_n>0\right)~\Longrightarrow~{\rm Law}\left((B_{+}(t))_{t\in [0,1]}\right),\quad n\to\infty,
$$
weakly on the space of probability measures on $D[0,1]$ endowed with the Skorokhod $J_1$-topology.
\end{thm}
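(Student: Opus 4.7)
The strategy is to reduce the problem to the classical Yaglom functional limit theorem for a critical branching process in random environment (BPRE) applied to the embedded skeleton $Y_k:=Z_{S_k}$, then undo the time change via renewal theory for $(S_k)$ and show that the intra-gap excursions of $\log Z$ are negligible on the scale $\sqrt n$.

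\textbf{Skeleton as BPRE.} The process $(Y_k)_{k\geq 0}$ is a BPRE with iid random generating functions $\tilde f_k(s)=f_\mu^{\circ(d_k-1)}(f_{\nu_k}(s))$. By the chain rule and $A_\mu=1$, $\tilde A_k=A_{\nu_k}$, so the associated random walk $\log\tilde A_1+\cdots+\log\tilde A_k$ is centered with variance $\mathfrak v^2$ per step by (A1). Standard composition identities give $\sigma_{\tilde f_k}=(d_k-1)\sigma_\mu+\sigma_{\nu_k}$ and analogous bounds for $\kappa(\tilde f_k;a)$ in terms of $\kappa(f_{\nu_k};a)$, $\sigma_{\nu_k}$ and $d_k$, so (A1)--(A4) imply the moment conditions required by the BPRE Yaglom FLT (Afanasyev--Geiger--Kersting--Vatutin, cf.~\cite{kersting2017discrete}). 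This yields, conditionally on $\{Y_N>0\}$,
$$
\bigl(\log Y_{\lfloor Nt\rfloor}/(\mathfrak v\sqrt N)\bigr)_{t\in[0,1]}\Longrightarrow (B_+(t))_{t\in[0,1]}.
$$

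\textbf{Time change.} With $N(n):=\#\{k\geq 1:S_k\leq n\}$, the SLLN applied to $(d_k)$ gives $N(n)/n\to\m^{-1}$ a.s., so $\mathfrak v\sqrt{N(n)}\sim\mathfrak v\sqrt{\m^{-1}n}$. Because $(d_k)$ is independent of $(\nu_k)$ and the Brownian meander has continuous sample paths, an Anscombe-type argument transfers the skeleton FLT to the random index $N(n)$, and the piecewise-constant time change $t\mapsto N(\lfloor nt\rfloor)/N(n)$ converges uniformly in probability to the identity on $[0,1]$. The comparison between $\{Z_n>0\}\subseteq\{Y_{N(n)}>0\}$ is resolved by Theorem~\ref{thm:tail}: both probabilities are of order $1/\sqrt n$, so the two conditionings differ by a tight density and the meander limit persists under either.

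\textbf{In-between excursions.} It remains to prove
$$
\Delta_n:=\max_{0\leq j\leq n}\bigl|\log Z_j-\log Y_{N(j)}\bigr|=o_{\mmp}(\sqrt n)
$$
conditionally on $\{Z_n>0\}$. Each excursion over $[S_{k-1},S_k]$ consists of one $\nu_k$-step followed by $d_k-1$ critical Galton--Watson $\mu$-steps. For the latter, the martingale concentration $\max_\ell|\log Z_{S_{k-1}+\ell}-\log Z_{S_{k-1}+1}|=O_\mmp(\sqrt{d_k/Z_{S_{k-1}+1}})$ applies when the starting value is large; and on the survival event, the skeleton Yaglom limit forces $Y_k\geq\exp(c\sqrt n)$ for all but a negligible fraction of $k$, so the GW fluctuations are microscopic there. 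The remaining small-$Y_k$ indices form a boundary segment of length $o(N(n))$ and contribute at most $O(\max_k|\log A_{\nu_k}|+\log\max_k d_k)$, which is $o(\sqrt n)$ by (A1), (A4) and the tail bound $\max_{k\leq N(n)}d_k=o_\mmp(\sqrt n)$ inherited from $\E d_1^{3/2}<\infty$. This last step is the main obstacle: the starting values $Y_{k-1}$ range from $1$ up to $e^{\Theta(\sqrt n)}$, so no single concentration estimate suffices, and one must split carefully according to the size of $Y_{k-1}$, using the full strength of (A1), (A3), (A4) to balance the two regimes.
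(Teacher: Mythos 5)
Your overall architecture --- embedded BPRE skeleton, renewal time change, negligibility of the intra-gap fluctuations --- is the same as the paper's. Two remarks on the first two steps. The skeleton generating function is $f_{\nu_k}\circ f_\mu^{\circ(d_k-1)}$, not $f_\mu^{\circ(d_k-1)}\circ f_{\nu_k}$: at time $S_{k-1}$ the reproduction law is $\nu_k$ and the $\mu$-steps follow, so the correct second-moment formula is $\sigma_{\nu_k}+\sigma_\mu(\E d-1)/A_{\nu_k}$; this is harmless for the argument. More seriously, ``both probabilities are of order $1/\sqrt n$'' does not by itself let you swap the conditionings: you need the symmetric difference of $\{Z_n>0\}$ and $\{Z_{S_{\lfloor\m^{-1}n\rfloor}}>0\}$ to have probability $o(n^{-1/2})$. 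The paper proves this using the Baum--Katz bound $\P\{|S_n-\m n|\ge\varepsilon n\}=o(n^{-1/2})$ (this is where $\E d_1^{3/2}<\infty$ enters) together with $\P\{Z_{S_{\lfloor \m^{-1}(1+\varepsilon)n\rfloor}}>0\}/\P\{Z_{S_{\lfloor \m^{-1}n\rfloor}}>0\}\to(1+\varepsilon)^{-1/2}$; your ``tight density'' remark is not a proof of this.

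The genuine gap is the third step, which you correctly identify as the crux but leave open, and the route you sketch is both unnecessary and, as written, broken. The key point you miss is that you do not need the excursions to be microscopic: it suffices that $n^{-C}\le Z_{\lfloor nt\rfloor}/Z_{S_{\vartheta(\lfloor nt\rfloor)-1}}\le n^{C}$ uniformly in $t$ with high conditional probability for some fixed $C$, since $\log n^{C}=o(\sqrt n)$. The paper obtains the upper bound by dominating the supremum by the \emph{sum} over $k\le\vartheta(n)$ of $Z_{S_{k-1}}^{-1}\sum_{j}\max_{0\le i\le d_k}\widetilde Z^{(j)}_i(S_{k-1})$ and applying Markov's inequality against the threshold $\varepsilon n^3$ (the conditioning costs $O(n^{1/2})$, the renewal theorem contributes $O(n)$, and $\E\max_{0\le k\le d}\widetilde Z_k\le 1+\E d$), and the lower bound by noting that the fraction of lines surviving a gap is a binomial proportion with success probability $\widetilde p=\P\{\widetilde Z_d>0\}>0$, controlled by the estimate $\P\{0<N^{-1}{\rm Bin}(N,p)\le x\}\le p(1-p)x/(p-x)^2$, which is \emph{uniform in $N$}. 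Because of this uniformity, no case analysis on the size of $Y_{k-1}$ is needed --- exactly the split your sketch cannot complete. In your version, the claim that the small-$Y_k$ indices form ``a boundary segment of length $o(N(n))$'' is not established at the pre-limit level (it would itself require a uniform lower bound on the conditioned skeleton), and the asserted per-excursion contribution $O(\max_k|\log A_{\nu_k}|+\log\max_k d_k)$ in that regime is unjustified: a critical Galton--Watson excursion started from a small population can lose almost all of its lines inside a gap, and controlling $\inf_t Z_{\lfloor nt\rfloor}/Z_{S_{\vartheta(\lfloor nt\rfloor)-1}}$ from below is precisely the part of the argument that the binomial lemma handles.
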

Using formula (1.1) in~\cite{durrett1977weak} we obtain 
the following one-dimensional result.
\begin{cor}\label{cor:clt}
Assume (A1)-(A4). Then, for every fixed $t\in (0,1]$,
\begin{equation}\label{eq:clt}
\lim_{n\to\infty}\mathbb{P}\left\{\frac{\log Z_{\lfloor nt\rfloor}}{\mathfrak{v}\sqrt{\m^{-1}n}}\geq x \,\Big|\, Z_{n}>0\right\}=\mathbb{P}\{B_{+}(t)\geq x\},\quad x\geq 0.
\end{equation}
The random variable 
$B_{+}(t)$ has an absolutely continuous distribution with a bounded nonvanishing density on $[0,\infty)$. Furthermore,
$$
\mathbb{P}\{B_{+}(1)\le x\}=1-e^{-x^2/2},\quad x\geq 0,
$$
so $B_{+}(1)$ has a Rayleigh distribution.
\end{cor}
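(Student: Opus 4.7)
The plan is to deduce \eqref{eq:clt} as a one-dimensional consequence of the functional convergence in Theorem~\ref{thm:flt}, and then to unpack the distributional properties of $B_{+}(t)$ from the explicit density formula (1.1) in~\cite{durrett1977weak}.

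For the first assertion I would apply the continuous mapping theorem to the evaluation functional $\pi_t:D[0,1]\to \mathbb{R}$ defined by $\pi_t(\omega)=\omega(t)$. Although $\pi_t$ is not continuous on all of $D[0,1]$, it is continuous at every $\omega$ that itself is continuous at $t$, and the sample paths of the Brownian meander are almost surely continuous on $[0,1]$. Hence the set of discontinuities of $\pi_t$ carries zero mass under the law of $B_{+}$, so Theorem~\ref{thm:flt} yields weak convergence of the conditional law of $(\log Z_{\lfloor nt\rfloor})/(\mathfrak{v}\sqrt{\m^{-1}n})$ given $Z_n>0$ to the law of $B_{+}(t)$. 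Combined with the absolute continuity of $B_{+}(t)$ discussed next, this upgrades to convergence of distribution functions at every $x\geq 0$, which is \eqref{eq:clt}.

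For the second assertion I would invoke formula~(1.1) of~\cite{durrett1977weak}, which exhibits the density of $B_{+}(t)$ as an explicit expression built from Gaussian kernels, obtained by conditioning on the location of the last zero $\zeta$ of Brownian motion before time $1$, rescaling the post-$\zeta$ excursion, and applying the reflection principle. Absolute continuity is immediate from this representation; strict positivity on $(0,\infty)$ follows because each Gaussian factor is strictly positive for every $x>0$; and boundedness of the density on $[0,\infty)$ is read off by bounding the integrand uniformly in $x$. Specializing to $t=1$ the formula collapses to the Rayleigh density $xe^{-x^2/2}$ on $[0,\infty)$, whose distribution function is $1-e^{-x^2/2}$, giving the third assertion.

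Since Theorem~\ref{thm:flt} does the analytical heavy lifting, no serious obstacle is expected. The only subtle point is the justification of the continuous mapping theorem in view of the fact that $\pi_t$ is not globally continuous on $D[0,1]$; this is handled by the almost sure continuity of the limiting Brownian meander. All remaining statements are routine manipulations of the explicit formula from~\cite{durrett1977weak}.
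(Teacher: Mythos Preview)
Your proposal is correct and follows the same route as the paper, which simply states that the corollary is obtained from Theorem~\ref{thm:flt} together with formula~(1.1) in~\cite{durrett1977weak}. You have merely supplied the standard details (continuous mapping via $\pi_t$ and reading off the density properties from the explicit formula) that the paper leaves implicit.
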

\begin{rem}
The assumption (A4) and the last part of the assumption 
(A1) can be weakened without changing the formulations of the main results. A version of 
(A4) appears as Assumption (C) in~\cite[Chapter 5]{kersting2017discrete}. It is 
a convenient general condition allowing for an (asymptotically) closed form of the survival probability and also validity 
of a functional limit theorem for a critical branching process in iid random environment. A more general version of Assumption (C) can be found in~\cite{afanasyev:geiger:kersting:vatutin}. However, we prefer to sacrifice 
generality in favor 
of transparency and simplicity of the formulations.
\end{rem}

\section{Proofs}

The proof of our main results consists of three steps. First, we analyze an embedded process $(Z_{S_n})_{n\ge 0}$ by finding its survival asymptotic and proving a counterpart of 
Theorem~\ref{thm:flt}. Second, 
we deduce from the results obtained for $(Z_{S_n})_{n\ge 0}$ the corresponding statements 
for a randomly stopped process $(Z_{S_{\vartheta(n)}})_{n\ge 0}$, where $(\vartheta(n))_{n\geq 0}$ is the first passage time process for the random walk $(S_k)_{k\geq 0}$. At 
the last step, we show that $(Z_{S_{\vartheta(n)}})_{n\ge 0}$ is uniformly close to $(Z_{n})_{n\ge 0}$.

\subsection{Analysis of the embedded process}

Observe that $(Z_{S_k})_{k\ge 0}$ is a branching process in iid random environment $\widetilde{\mathbb{Q}}=(\widetilde{\nu}_k)_{k\geq 1}$ which can be explicitly described as follows. Let $((\widetilde{Z}^{(i)}_j)_{j\geq 0})_{i\geq 0}$ 
be a sequence 
of 
independent 
copies of a critical Galton--Watson process $(\widetilde{Z}_j)_{j\geq 0}$ in deterministic environment with the offspring distribution $\mu$ and $\widetilde{Z}_0=1$. Suppose that $((\widetilde{Z}^{(i)}_j)_{j\geq 0})_{i\geq 0}$ is independent of the environment $\mathbb{Q}$. Then
\begin{equation}\label{eq:embedded_offspring_distribution}
\widetilde{\nu}_k(\{j\})=\sum_{l=0}^{\infty}\nu_k(\{l\})\mathbb{P}\left\{\sum_{i=1}^{l}\widetilde{Z}^{(i)}_{d_k-1}=j\right\},\quad k,j\in\mathbb{N}_0.
\end{equation}
Let $\widetilde{\nu}$ be a generic copy of iid random measures $(\widetilde{\nu}_k)_{k\ge 1}$. Put
$$
\widetilde{g}(s):=\E s^{\widetilde{Z}_{d-1}},\quad |s|\leq 1,
$$
where $d$ is assumed independent of $(\widetilde{Z}_k)_{k\geq 0}$. Equality 
~\eqref{eq:embedded_offspring_distribution} entails 
that the 
generating function of the random measure $\widetilde{\nu}$ is given by
$$
f_{\widetilde{\nu}}(s)=f_{\nu}(\widetilde{g}(s)),\quad |s|\leq 1.
$$
Since $\widetilde{g}^{\prime}(1)=\E\widetilde{Z}_{d-1}=1$, the latter 
formula immediately implies that
\begin{equation}\label{eq:a_embedded}
A_{\widetilde{\nu}_k}=f^{\prime}_{\widetilde{\nu}_k}(1)=f^{\prime}_{\nu_k}(1)=A_{\nu_k}=A_k,\quad k\in\mathbb{N}_0.
\end{equation}
Further,
$$
\sigma_{\widetilde{\nu}_k}=\frac{f^{\prime\prime}_{\widetilde{\nu}_k}(1)}{(f^{\prime}_{\widetilde{\nu}_k}(1))^2}=\frac{f^{\prime\prime}_{\nu_k}(1)+f^{\prime}_{\nu_k}(1)\widetilde{g}^{\prime\prime}(1)}{(f^{\prime}_{\nu_k}(1))^2}=\sigma_{\nu_k}+\frac{\sigma_{\mu}(\E d-1)}{A_{\nu_k}}=\sigma_k+\frac{\sigma_{\mu}(\E d-1)}{A_k},\quad k\in\mathbb{N}_0,
$$
where we have used that
\begin{equation}\label{eq:g_tilde_second_moment}
\widetilde{g}^{\prime\prime}(1)=\E \widetilde{Z}_{d-1}(\widetilde{Z}_{d-1}-1)=\sigma_{\mu}(\E d-1),
\end{equation} see, for instance, 
Chapter I.2 in \cite{athreya:ney} for the last equality.
Note that~\eqref{eq:a_embedded} guarantees 
that
$$
\E \log A_{\widetilde{\nu}_1} = \E \log A_{\nu_1} = \E \log A_1 = 0,
$$
which means that the embedded process $(Z_{S_n})_{n\geq 0}$ is {\bf critical}. In particular,
\begin{equation}\label{eq:tau_S_def}
\tauE:=\inf\{k\geq 0:Z_{S_k}=0\}<\infty\quad\text{a.s.}
\end{equation}

Recall that 
we denote by $\kappa(f_{\theta};a)$ the truncated second moment of a measure $\theta$.
\begin{lemma}
Let $a_{\ast}\in\mathbb{N}_0$ and assume that 
$\E (\log^{+}\kappa(f_{\nu};a_{\ast}))^4<\infty$ and $\E (\log^{-} A_{\nu})^4<\infty$. Then $\E (\log^{+}\kappa(f_{\widetilde{\nu}};a_{\ast}))^4<\infty$.
\end{lemma}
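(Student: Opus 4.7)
My plan is to exploit the convolution structure of~\eqref{eq:embedded_offspring_distribution} to bound the truncated second moment of $\widetilde{\nu}$ by that of $\nu$, up to correction terms polynomial in $d$ and $A_{\nu}^{-1}$, and then take logarithms. Recall that $A_{\widetilde{\nu}}=A_{\nu}$ by~\eqref{eq:a_embedded}, and that, given $\nu$ and $d$, a random variable with law $\widetilde{\nu}$ is distributed as $\sum_{i=1}^{L}\widetilde{Z}^{(i)}_{d-1}$, where $L\sim\nu$ is independent of the iid copies $(\widetilde{Z}^{(i)}_{d-1})_{i\geq 1}$ of the critical Galton--Watson process at time $d-1$, each of which has mean $1$ and second factorial moment $\sigma_{\mu}(d-1)$ by~\eqref{eq:g_tilde_second_moment}.

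Concretely, writing $\E$ for the expectation with respect to the Galton--Watson copies only,
$$
\sum_{j\geq a_{\ast}}j^{2}\widetilde{\nu}(\{j\})=\sum_{l=0}^{\infty}\nu(\{l\})\,\E\Bigl[\Bigl(\sum_{i=1}^{l}\widetilde{Z}^{(i)}_{d-1}\Bigr)^{\!2}\mathbbm{1}_{\{\sum_{i=1}^{l}\widetilde{Z}^{(i)}_{d-1}\geq a_{\ast}\}}\Bigr].
$$
Dropping the indicator and using $\E[(\sum_{i=1}^{l}\widetilde{Z}^{(i)}_{d-1})^{2}]=l^{2}+l\sigma_{\mu}(d-1)$, the terms with $l<a_{\ast}$ contribute at most $a_{\ast}^{2}+a_{\ast}\sigma_{\mu}(d-1)$. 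For $l\geq a_{\ast}\geq 1$ the trivial inequality $l\leq l^{2}/a_{\ast}$ gives $\sum_{l\geq a_{\ast}}\nu(\{l\})(l^{2}+l\sigma_{\mu}(d-1))\leq A_{\nu}^{2}\kappa(f_{\nu};a_{\ast})(1+\sigma_{\mu}(d-1)/a_{\ast})$, while the edge case $a_{\ast}=0$ is handled directly via $\kappa(f_{\widetilde{\nu}};0)=\kappa(f_{\nu};0)+\sigma_{\mu}(d-1)/A_{\nu}$. Dividing through by $A_{\nu}^{2}$, both cases produce a pointwise inequality
$$
\kappa(f_{\widetilde{\nu}};a_{\ast})\leq C(1+d)\bigl(\kappa(f_{\nu};a_{\ast})+A_{\nu}^{-2}\bigr),
$$
with $C=C(a_{\ast},\sigma_{\mu})$ deterministic.

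The conclusion then follows by taking $\log^{+}$ and applying $\log^{+}(xy)\leq\log^{+}x+\log^{+}y$, $\log^{+}(x+y)\leq \log 2+\log^{+}x+\log^{+}y$, $\log^{+}(A_{\nu}^{-2})=2\log^{-}A_{\nu}$, and $(a+b+c+e)^{4}\leq 64(a^{4}+b^{4}+c^{4}+e^{4})$, yielding
$$
(\log^{+}\kappa(f_{\widetilde{\nu}};a_{\ast}))^{4}\leq C'\bigl((\log^{+}\kappa(f_{\nu};a_{\ast}))^{4}+(\log^{+}d)^{4}+(\log^{-}A_{\nu})^{4}+1\bigr).
$$
Upon taking expectations the first and third terms on the right are finite by the hypotheses of the lemma, while $\E(\log^{+}d)^{4}<\infty$ because (A3) gives $\E d^{3/2}<\infty$ \emph{a fortiori}. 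The only delicate point in the whole argument is the tail bookkeeping: one must retain the truncation at $a_{\ast}$ throughout so that only $\kappa(f_{\nu};a_{\ast})$, rather than the full normalized second factorial moment $\sigma_{\nu}$ (which need not be $(\log^{+})^{4}$-integrable), appears on the right. The inequality $l\leq l^{2}/a_{\ast}$ on $\{l\geq a_{\ast}\}$ is the only substantive computational step that accomplishes this.
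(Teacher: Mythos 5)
Your proof is correct and follows essentially the same route as the paper: expand $\kappa(f_{\widetilde{\nu}};a_\ast)$ via the convolution formula, bound the second moment of the compound sum, split the sum at $l=a_\ast$ so that the large-$l$ part is controlled by $\kappa(f_{\nu};a_\ast)$ and the small-$l$ part by a negative power of $A_{\nu}$, and then take $\log^{+}$ and fourth powers. The only (immaterial) difference is that the paper integrates out $d$ first, so the correction factor is the deterministic constant $C_1\E d$ rather than your $C(1+d)$, which costs you the extra (harmless, given (A3)) requirement $\E(\log^{+}d)^4<\infty$.
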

\begin{proof}
We start by writing 
\begin{multline*}
\kappa(f_{\widetilde{\nu}};a)=\frac{1}{A^2_{\widetilde{\nu}}}\sum_{j=a}^{\infty}j^2\sum_{l=0}^{\infty}\nu(\{l\})\mathbb{P}\left\{\sum_{i=1}^{l}\widetilde{Z}^{(i)}_{d-1}=j\right\}\\
=\frac{1}{A_{\nu}^2}\sum_{l=0}^{\infty}\nu(\{l\}) \E\left(\left(\sum_{i=1}^{l}\widetilde{Z}^{(i)}_{d-1}\right)^2 \1_{\left\{\sum_{i=1}^{l}\widetilde{Z}^{(i)}_{d-1}\geq a\right\}}\right).
\end{multline*}
In view of~\eqref{eq:g_tilde_second_moment}, for all $a\in\mathbb{N}$,
\begin{align*}
\E\left(\left(\sum_{i=1}^{l}\widetilde{Z}^{(i)}_{d-1}\right)^2 \1_{\left\{\sum_{i=1}^{l}\widetilde{Z}^{(i)}_{d-1}\geq a\right\}}\right)\leq \E\left(\sum_{i=1}^{l}\widetilde{Z}^{(i)}_{d-1}\right)^2 \leq C_1 m l^2,
\end{align*}
where 
$m = \E d$ and 
$C_1>0$ is a constant. Thus, it suffices to check that
$$
\E \left(\log^{+} \frac{1}{A_{\nu}^2}\sum_{l=0}^{a_{\ast}}\nu(\{l\}) \E\left(\left(\sum_{i=1}^{l}\widetilde{Z}^{(i)}_{d-1}\right)^2 \1_{\left\{\sum_{i=1}^{l}\widetilde{Z}^{(i)}_{d-1}\geq a_{\ast}\right\}}\right)\right)^4<\infty.
$$
The inner expectation is equal to $0$ if $l=0$ and is uniformly bounded by a constant $C_2>0$ for all $l=1,\ldots,a_{\ast}$. It remains to note that
\begin{multline*}
\E \left(\log^{+} \frac{C_2}{A_{\nu}^2}\sum_{l=1}^{a_{\ast}}\nu(\{l\})\right)^4\leq \E \left(\log^{+} \frac{C_2}{A_{\nu}^2}\sum_{l=1}^{\infty}l\nu(\{l\})\right)^4\leq \E \left(\log^{+} \frac{C_2}{A_{\nu}}\right)^4\\
\leq C_3 \E \left(\log^{+} \frac{1}{A_{\nu}}\right)^4+C_4 =C_3 \E \left(\log^{-} A_{\nu}\right)^4+C_4<\infty
\end{multline*}
for some $C_3>0$ and $C_4\geq 0$.
\end{proof}

Using~Theorem 5.1 on p.~107 in \cite{kersting2017discrete} 
we obtain the following result.
\begin{assertion}\label{prop:embedded_process_survival}
Assume (A1), (A2), (A4) and $\E d<\infty$.
Then
$$
\mathbb{P}\{Z_{S_n}>0\}~\sim~\frac{\cZS}{\sqrt{n}},\quad n\to\infty
$$
for some constant $\cZS>0$.
\end{assertion}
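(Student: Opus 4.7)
The plan is to realize $(Z_{S_n})_{n \geq 0}$ as a critical branching process in iid random environment (BPRE) and then apply Theorem~5.1 of Kersting--Vatutin directly to obtain the $n^{-1/2}$ survival asymptotic. The whole proposition is essentially a bookkeeping exercise that checks every hypothesis of that cited theorem for the embedded environment $(\widetilde{\nu}_k)_{k \geq 1}$.

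First I would verify the structural hypothesis: since $(d_k,\nu_k)_{k \geq 1}$ are iid and the offspring measure $\widetilde{\nu}_k$ given by \eqref{eq:embedded_offspring_distribution} is a measurable function of $(d_k,\nu_k)$ and of the auxiliary independent Galton--Watson families $((\widetilde{Z}^{(i)}_j)_{j\geq 0})_{i\geq 0}$, the sequence $(\widetilde{\nu}_k)_{k\geq 1}$ is iid. Hence $(Z_{S_n})_{n\geq 0}$ is genuinely a BPRE. Next, I would verify the analytic hypotheses of the cited theorem one after the other. Criticality together with finite log-variance follows from \eqref{eq:a_embedded}: $A_{\widetilde{\nu}_1}=A_1$, so $\E \log A_{\widetilde{\nu}_1}=0$ and $\Var(\log A_{\widetilde{\nu}_1})=\mathfrak{v}^2\in(0,\infty)$ by (A1). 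The left-tail condition reads $\E(\log^- A_{\widetilde{\nu}_1})^4=\E(\log^- A_1)^4<\infty$, again from (A1). Finally, the truncated-second-moment condition $\E(\log^+\kappa(f_{\widetilde{\nu}};a))^4<\infty$ is supplied by the preceding lemma, which reduces it to (A4) and (A1).

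With all these ingredients in place, Theorem~5.1 of Kersting--Vatutin applies verbatim to $(Z_{S_n})_{n\geq 0}$ and produces a constant $\cZS>0$ with $\mathbb{P}\{Z_{S_n}>0\}\sim \cZS/\sqrt{n}$.

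The main obstacle in this step was not the application of Kersting--Vatutin itself but the transfer of the truncated second moment integrability from $\nu$ to $\widetilde{\nu}$, since $\widetilde{\nu}$ is obtained from $\nu$ by convolving with the iterated Galton--Watson kernel $\widetilde{g}$ and this could a priori destroy the tail moment condition. That obstacle was handled in the preceding lemma by bounding $\E(\sum_{i=1}^l \widetilde{Z}^{(i)}_{d-1})^2$ by a constant multiple of $l^2$ using \eqref{eq:g_tilde_second_moment}. Once this bound is in hand, the proof of Proposition~\ref{prop:embedded_process_survival} itself is an immediate citation.
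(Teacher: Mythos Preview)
Your proposal is correct and follows exactly the paper's approach: the paper proves the proposition by a direct citation of Theorem~5.1 in Kersting--Vatutin, after having verified in the preceding discussion and lemma that the embedded environment $(\widetilde{\nu}_k)_{k\geq 1}$ is iid, critical with finite log-variance (via \eqref{eq:a_embedded} and (A1)), and satisfies the truncated second moment condition (via the lemma just before the proposition). Your write-up is in fact more explicit than the paper's one-line justification.
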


Furthermore, Theorem 5.6 on p.~126 in \cite{kersting2017discrete} entails the proposition. 
\begin{assertion}\label{prop:embedded_process_flt}
Assume (A1), (A2), (A4) and $\E d<\infty$ and $\E(\kappa(f_{\nu};a))^4<\infty$ for some $a\in\mathbb{N}_0$. Then, with $(B_{+}(t))_{t\in [0,1]}$ being the Brownian meander, 
$$
{\rm Law}\left(\left(\frac{\log Z_{S_{\lfloor nt\rfloor}}}{\mathfrak{v}\sqrt{n}}\right)_{t\in [0,1]}\,\Big|\, Z_{S_n}>0\right)~\Longrightarrow~{\rm Law}\left((B_{+}(t))_{t\in [0,1]}\right),\quad n\to\infty
$$
weakly on the space of probability measures on $D[0,1]$ endowed with the Skorokhod $J_1$-topology.
\end{assertion}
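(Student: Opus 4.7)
The plan is to invoke Theorem 5.6 on p.~126 of Kersting and Vatutin directly, reducing Proposition \ref{prop:embedded_process_flt} to the known functional Yaglom-type theorem for a critical BPRE. As we have already observed, the process $(Z_{S_n})_{n\ge 0}$ is, by the branching property, a branching process in the iid random environment $\widetilde{\mathbb{Q}}=(\widetilde{\nu}_k)_{k\geq 1}$, with the generating function of each $\widetilde{\nu}_k$ expressed via the composition $f_{\widetilde{\nu}_k}(s)=f_{\nu_k}(\widetilde{g}(s))$. The task therefore reduces to verifying that this environment satisfies the hypotheses of Theorem 5.6 in the form used in \cite{kersting2017discrete}.

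The verification proceeds in three steps. First, criticality: by \eqref{eq:a_embedded}, $A_{\widetilde{\nu}_k}=A_{\nu_k}=A_k$, so (A1) yields $\mathbb{E}\log A_{\widetilde{\nu}_1}=\mathbb{E}\log A_1=0$ and $\mathrm{Var}(\log A_{\widetilde{\nu}_1})=\mathfrak{v}^2\in(0,\infty)$, which in particular pins down the scaling constant of the limit theorem and ensures that it matches the normalization $\mathfrak{v}\sqrt{n}$ in the statement. Second, the required integrability of $\log^- A_{\widetilde{\nu}_1}$ follows from the same identity together with the last part of (A1). Third, and most substantively, the fourth-moment tail condition $\mathbb{E}(\log^+\kappa(f_{\widetilde{\nu}};a))^4<\infty$ for some $a\in\mathbb{N}$ is precisely the conclusion of the preceding lemma, which transports (A4) from the environment $\nu$ to the embedded environment $\widetilde{\nu}$.

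Once all these conditions are confirmed, Theorem 5.6 of \cite{kersting2017discrete} applies to the iid-environment process $(Z_{S_n})_{n\ge 0}$ and gives the Brownian-meander functional limit theorem with the correct normalization, establishing Proposition \ref{prop:embedded_process_flt}. I do not expect any real obstacle here: the nontrivial input, namely the preservation of the $\log^+\kappa$-moment condition after mixing with the internal Galton--Watson excursions of length $d_k-1$, has already been isolated and handled in the lemma above; what remains is purely bookkeeping matching our notation to the assumptions of \cite[Chapter 5]{kersting2017discrete}.
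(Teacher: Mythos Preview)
Your proposal is correct and follows exactly the paper's route: the paper's entire argument is the single remark that Theorem~5.6 on p.~126 of \cite{kersting2017discrete} applies to the embedded iid-environment process $(Z_{S_n})_{n\ge 0}$, and you have simply spelled out the hypothesis verification (criticality and variance via \eqref{eq:a_embedded}, the $\log^-$-moment from (A1), and the $\kappa$-moment from the preceding lemma) that the paper leaves implicit.
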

Given next is the corollary which 
follows from formula 
(1.1) in~\cite{durrett1977weak}.
\begin{cor}\label{cor:embedded_process_clt}
Under the assumptions of Proposition~\ref{prop:embedded_process_flt}, for every fixed $t\in (0,1]$,
\begin{equation}\label{eq:embedded_process_clt}
\lim_{n\to\infty}\mathbb{P}\left\{\frac{\log Z_{S_{\lfloor nt\rfloor}}}{\mathfrak{v}\sqrt{n}}\geq x \,\Big|\, Z_{S_n}>0\right\}=\mathbb{P}\{B_{+}(t)\geq x\},\quad x\geq 0.
\end{equation}
The random variable 
$B_{+}(t)$ has an absolutely continuous distribution with a bounded nonvanishing density on $[0,\infty)$.
\end{cor}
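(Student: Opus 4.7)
The plan is to derive the corollary directly from Proposition~\ref{prop:embedded_process_flt} via the continuous mapping theorem applied to the coordinate projection $\pi_t: D[0,1]\to\mathbb{R}$, $\pi_t(\omega)=\omega(t)$, at a fixed $t\in (0,1]$.

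First, recall that the Brownian meander $(B_{+}(s))_{s\in [0,1]}$ has continuous sample paths almost surely. It is a standard fact for the Skorokhod $J_1$-topology that $\pi_t$ is continuous at every $\omega\in D[0,1]$ which is continuous at the point $t$ (for $t=1$, continuity at $1$ from the left is enough, and $J_1$-convergence to a path continuous on $[0,1]$ entails uniform convergence). Hence $\pi_t$ is continuous at the limiting trajectory with probability one. Combining Proposition~\ref{prop:embedded_process_flt} with the continuous mapping theorem, we deduce that conditionally on $\{Z_{S_n}>0\}$,
\begin{equation*}
\frac{\log Z_{S_{\lfloor nt\rfloor}}}{\mathfrak{v}\sqrt{n}}~\Longrightarrow~B_{+}(t),\quad n\to\infty,
\end{equation*}
weakly on $\mathbb{R}$.

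Next, to convert weak convergence into pointwise convergence of distribution functions at every $x\geq 0$, I would invoke the explicit density of $B_{+}(t)$ provided by formula~(1.1) in~\cite{durrett1977weak}. That formula yields a closed-form expression for $\mathbb{P}\{B_{+}(t)\in dx\}$ on $[0,\infty)$ as an elementary function involving a Gaussian factor and, for $t\in (0,1)$, a complementary-error-function-type term. A direct inspection of this expression shows that the density is continuous, strictly positive on $(0,\infty)$, vanishes continuously at $0$, and is uniformly bounded (the Gaussian factor controls the tail, while near the origin one sees that the density is $O(x)$). In particular, the distribution function of $B_{+}(t)$ is continuous everywhere on $\mathbb{R}$, so every $x\geq 0$ is a continuity point, and weak convergence yields~\eqref{eq:embedded_process_clt}. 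The special case $t=1$ recovers the Rayleigh distribution.

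The ``hard part'' is really cosmetic: the convergence is an immediate consequence of the functional limit theorem and continuous mapping, so the only nontrivial check is the absolute continuity, boundedness, and strict positivity of the density, and all three are read off from the Durrett--Iglehart formula. I would simply quote~\cite[formula (1.1)]{durrett1977weak} rather than reproduce the computation.
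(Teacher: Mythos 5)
Your proposal is correct and matches the paper's (very terse) argument: the corollary is obtained by projecting the functional limit theorem of Proposition~\ref{prop:embedded_process_flt} at the fixed time $t$ via the continuous mapping theorem, and the properties of the law of $B_{+}(t)$ (absolute continuity, bounded nonvanishing density, hence continuity of the distribution function at every $x\geq 0$) are read off from formula (1.1) in~\cite{durrett1977weak}, exactly as you do. The only difference is that you spell out the standard continuity of the coordinate projection in the $J_1$-topology at a.s.\ continuous limit paths, which the paper leaves implicit.
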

Propositions~\ref{prop:embedded_process_survival} and~\ref{prop:embedded_process_flt} are the key ingredients for the proof of our main results.

\subsection{Proof of Proposition~\ref{prop:criticallity} and Theorem~\ref{thm:tail}}
Recall that $\tauE=\inf\{k\geq 0:Z_{S_k}=0\}$ is the extinction time of the embedded process $(Z_{S_k})_{k\geq 0}$ and note that
$$
\mathbb{P}\{\tauS<\infty\}\geq \mathbb{P}\{\tauE<\infty\}=1,
$$
where the equality is justified by ~\eqref{eq:tau_S_def}. This proves Proposition~\ref{prop:criticallity}.

For $n\in\mathbb{N}_0$, define the first passage time $\vartheta(n)$ by
\begin{equation}\label{eq:first_passage_time}
\vartheta(n):=\inf\{k\geq 0:S_k > n\}.
\end{equation}
Note that
$$
\mathbb{P}\{Z_{S_{\vartheta(n)}}>0\}\leq \mathbb{P}\{Z_n>0\}\leq \mathbb{P}\{Z_{S_{\vartheta(n)-1}}>0\},\quad n\in\mathbb{N}_0.
$$
In view of the strong law of large numbers for $\vartheta(n)$, which reads 
$$
\frac{\vartheta(n)}{n}~\to~\frac{1}{{\tt m}},\quad n\to\infty,\quad\text{a.s.},
$$
and Proposition~\ref{prop:embedded_process_survival}, it is natural to expect that
\begin{equation}\label{eq:equivalence_two_sided_estimate}
\mathbb{P}\{Z_{S_{\vartheta(n)}}>0\}~\sim~\frac{{\tt m}^{1/2}\cZS}{\sqrt{n}}~\sim~\mathbb{P}\{Z_{S_{\vartheta(n)-1}}>0\},\quad n\to\infty.
\end{equation}
Checking relation~\eqref{eq:equivalence_two_sided_estimate} is clearly sufficient for a proof of Theorem~\ref{thm:tail}. Furthermore, \eqref{eq:equivalence_two_sided_estimate} would demonstrate 
that
\begin{equation}\label{eq:equa}
\cZ=\m^{1/2}\cZS.
\end{equation}

Observe that
$$
\mathbb{P}\{Z_{S_{\vartheta(n)}}>0\}=\mathbb{P}\{\tauE>\vartheta(n)\}=\mathbb{P}\{\tauE-1\geq \vartheta(n)\}=\mathbb{P}\{S_{\tauE-1}>n\},
$$
and, similarly,
$$
\mathbb{P}\{Z_{S_{\vartheta(n)-1}}>0\}=\mathbb{P}\{\tauE>\vartheta(n)-1\}=\mathbb{P}\{\tauE\geq \vartheta(n)\}=\mathbb{P}\{S_{\tauE}>n\}.
$$
The desired relation~\eqref{eq:equivalence_two_sided_estimate} follows from Theorem 3.1 in~\cite{robert2008tails} applied with $r=3/2$ provided we can check that
$$
n\mathbb{P}\{d>n\}=o(\mathbb{P}\{\tauE>n\})=o(\mathbb{P}\{Z_{S_n}>0\}),\quad n\to\infty.
$$
By Proposition ~\ref{prop:embedded_process_survival}, this  
is equivalent 
to
$$
\mathbb{P}\{d>n\}=o(n^{-3/2}),\quad n\to\infty,
$$
which 
is secured by assumption (A3). This completes the proof of Theorem~\ref{thm:tail}.

\subsection{Proof of Theorem~\ref{thm:flt}}
We start by noting that $\m<\infty$ together with the strong law of large numbers for $(\vartheta(n))$ imply
$$
\sup_{t\in [0,1]}\left|\frac{\vartheta(\lfloor nt\rfloor)-1}{n}-\frac{t}{\m}\right|~\to 0, 
\quad n\to\infty \quad \mbox{a.s.}
$$
Thus, the weak convergence claimed in Proposition~\ref{prop:embedded_process_flt} can be strengthened 
to the joint convergence
\begin{multline*}
{\rm Law}\left(\left(\left(\frac{\log Z_{S_{\lfloor \m^{-1}nt\rfloor}}}{\mathfrak{v}\sqrt{\m^{-1} n}},\frac{\vartheta(\lfloor nt\rfloor)-1}{\m^{-1}n}\right)_{t\in [0,1]}\,\Big|\, Z_{S_{\lfloor \m^{-1}n\rfloor}}>0\right)\right)\\
~\Longrightarrow~{\rm Law}\left(\left(B_{+}(t),t\right)_{t\in [0,1]}\right),\quad n\to\infty,
\end{multline*}
which holds weakly on the space of probability measures on $D[0,1]\times D[0,1]$ endowed with the product 
$J_1$-topology. 
Using 
the continuous mapping theorem in combination with continuity of the composition (see, for instance, Theorem 13.2.2 in \cite{whitt2002stochastic}) 
we infer 
\begin{equation}\label{eq:flt_proof1}
{\rm Law}\left(\left(\left(\frac{\log^{+} Z_{S_{\vartheta(\lfloor nt\rfloor)-1}}}{\mathfrak{v}\sqrt{\m^{-1}n}}\right)_{t\in [0,1]}\,\Big|\, Z_{S_{\lfloor \m^{-1}n\rfloor}}>0\right)\right)~\Longrightarrow~{\rm Law}\left((B_{+}(t))_{t\in [0,1]}\right),\quad n\to\infty
\end{equation}
weakly on the space of probability measures on $D[0,1]$. We have replaced $\log$ by $\log^{+}$ in~\eqref{eq:flt_proof1} because 
the event $\{Z_{S_{\lfloor \m^{-1}n\rfloor}}>0\}$ does not entail 
the event
$$
\Big\{Z_{S_{\vartheta(\lfloor nt\rfloor)-1}}>0\text{ for all } t\in [0,1]\Big\}.
$$
Now we 
check that~\eqref{eq:flt_proof1} secures 
\begin{equation}\label{eq:flt_proof2}
{\rm Law}\left(\left(\left(\frac{\log Z_{S_{\vartheta(\lfloor nt\rfloor)-1}}}{\mathfrak{v}\sqrt{\m^{-1}n}}\right)_{t\in [0,1]}\,\Big|\, Z_{n}>0\right)\right)~\Longrightarrow~{\rm Law}\left((B_{+}(t))_{t\in [0,1]}\right),\quad n\to\infty.
\end{equation}
By Proposition~\ref{prop:embedded_process_survival}, 
Theorem~\ref{thm:tail} and \eqref{eq:equa},
\begin{equation}\label{eq:flt_proof21}
\mathbb{P}\{Z_{n}>0\}~\sim~\mathbb{P}\{Z_{S_{\lfloor \m^{-1}n\rfloor}}>0\}~\sim~ \frac{\cZ}{\sqrt{n}},\quad n\to\infty.
\end{equation}
Thus, limit relation~\eqref{eq:flt_proof2} follows once we can prove that
\begin{equation}\label{eq:flt_proof3}
\lim_{n\to\infty}\sqrt{n}\mathbb{P}\{Z_{S_{\lfloor \m^{-1}n\rfloor}}>0,Z_n=0\}=\lim_{n\to\infty}\sqrt{n}\mathbb{P}\{Z_{S_{\lfloor \m^{-1}n\rfloor}}=0,Z_n>0\}=0.
\end{equation}
In view of~\eqref{eq:flt_proof21}, it suffices to show 
that
$$
\lim_{n\to\infty}\mathbb{P}\{Z_n>0\,|\,Z_{S_{\lfloor \m^{-1}n\rfloor}}>0\}=1.
$$
Fix any $\varepsilon>0$. The assumption (A3) implies that 
$$
\mathbb{P}\{|S_n-\m n|\geq \varepsilon n\}=o(n^{-1/2}),\quad n\to\infty,
$$
by Theorem 4 in~\cite{baum1965convergence}.
Thus, 
\begin{align*}
\mathbb{P}\{Z_n>0\,|\,Z_{S_{\lfloor \m^{-1}n\rfloor}}>0\}&=\mathbb{P}\{Z_n>0, S_{\lfloor \m^{-1}(1+\varepsilon)n\rfloor}>n\,|\,Z_{S_{\lfloor \m^{-1}n\rfloor}}>0\}+o(1)\\
&\geq \mathbb{P}\{Z_{S_{\lfloor \m^{-1}(1+\varepsilon)n\rfloor}}>0, S_{\lfloor \m^{-1}(1+\varepsilon)n\rfloor}>n\,|\,Z_{S_{\lfloor \m^{-1}n\rfloor}}>0\}+o(1)\\
&=\mathbb{P}\{Z_{S_{\lfloor \m^{-1}(1+\varepsilon)n\rfloor}}>0\,|\,Z_{S_{\lfloor \m^{-1}n\rfloor}}>0\}+o(1)\\
&=\frac{\mathbb{P}\{Z_{S_{\lfloor \m^{-1}(1+\varepsilon)n\rfloor}}>0\}}{\mathbb{P}\{Z_{S_{\lfloor \m^{-1}n\rfloor}}>0\}}+o(1)\\
&\to (1+\varepsilon)^{-1/2},\quad n\to\infty,
\end{align*}
where we have used Proposition~\ref{prop:embedded_process_survival} for the last passage. Sending $\varepsilon\to 0$ gives~\eqref{eq:flt_proof3}. 

To finish the proof of Theorem~\ref{thm:flt} it remains to check that, for all 
$\varepsilon>0$,
\begin{equation}\label{eq:flt_proof5}
\lim_{n\to\infty}\mathbb{P}\left\{ \sup_{t\in [0,1]}\left|\frac{\log Z_{\lfloor nt\rfloor}-\log Z_{S_{\vartheta(\lfloor nt\rfloor)-1}}}{\mathfrak{v}\sqrt{\m^{-1}n}}\right|>\varepsilon \,\Big|\, Z_n>0\right\}=0.
\end{equation}
To this end, we need an auxiliary lemma. 

\begin{lemma}\label{lem1}
Assume (A2),  $\E d<\infty$ and that $d$ is independent of $(\widetilde{Z}_j)_{j\geq 0}$. Then $$
\E \left(\max_{0\leq k\leq d} \widetilde{Z}_k\right)\leq 1+\E d<\infty.
$$
\end{lemma}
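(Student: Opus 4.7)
My plan is to bound the running maximum by the running sum and then exploit criticality, which pins the expectation of $\widetilde Z_k$ at $1$ for every $k$.

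First I would handle deterministic upper indices. Since the $\widetilde Z_k$ are nonnegative integers, the pointwise inequality
$$
\max_{0\le k\le n}\widetilde Z_k \;\le\; \sum_{k=0}^{n}\widetilde Z_k
$$
is immediate for each $n\in\mathbb{N}_0$. Taking expectations and using assumption (A2), which yields $\E\widetilde Z_k = A_\mu^{\,k} = 1$ for all $k\ge 0$, gives the deterministic estimate
$$
\E\!\left(\max_{0\le k\le n}\widetilde Z_k\right)\;\le\; n+1.
$$

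To upgrade to the random index $d$, I would condition on $d$ and invoke the independence assumption. Decomposing according to the value of $d$,
$$
\E\!\left(\max_{0\le k\le d}\widetilde Z_k\right)=\sum_{n=0}^{\infty}\E\!\left(\max_{0\le k\le n}\widetilde Z_k\right)\mathbb{P}\{d=n\}\le \sum_{n=0}^{\infty}(n+1)\mathbb{P}\{d=n\}=1+\E d,
$$
which is finite by hypothesis. This is the entire argument. There is no serious obstacle: the key is to notice that Doob's $L^1$ maximal inequality is too weak (it would not produce a finite bound on the expected maximum of a nonnegative martingale), whereas the crude sum bound above is perfectly adequate once the expected increments are all equal to $1$ by criticality.
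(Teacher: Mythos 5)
Your proof is correct and takes essentially the same route as the paper: bound the running maximum by the running sum, use criticality (A2) to get $\E \widetilde Z_k=1$ for every $k$, and use independence of $d$ to handle the random index. The only cosmetic difference is that the paper writes the bound as $\E\sum_{k\ge 0}\widetilde Z_k \1_{\{d\ge k\}}$ and applies Tonelli, whereas you partition on $\{d=n\}$; these are the same computation.
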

\begin{proof}
The proof follows from the chain of inequalities
$$
\E \left(\max_{0\leq k\leq d} \widetilde{Z}_k\right) \leq
\mathbb{E} \left(\sum_{k\geq 0}\widetilde{Z}_k\1_{\{d\geq k\}}\right)=\sum_{k\geq 0}\E\widetilde{Z}_k\cdot\mathbb{P}\{d\geq k\}=1+\E d.
$$
\end{proof}
In order to prove~\eqref{eq:flt_proof5} we first show that 
\begin{equation}\label{eq:flt_proof51}
\lim_{n\to\infty}\mathbb{P}\left\{ \sup_{t\in [0,1]}\frac{\log Z_{\lfloor nt\rfloor}-\log Z_{S_{\vartheta(\lfloor nt\rfloor)-1}}}{\mathfrak{v}\sqrt{\m^{-1}n}}>\varepsilon \,\Big|\, Z_n>0\right\}=0.
\end{equation}
Note that
\begin{equation}\label{eq:decomposition}
Z_{\lfloor nt\rfloor}=\sum_{j=1}^{Z_{S_{\vartheta(\lfloor nt\rfloor)-1}}}\widetilde{Z}^{(j)}_{\lfloor nt\rfloor-S_{\vartheta(\lfloor nt\rfloor)-1}}(S_{\vartheta(\lfloor nt\rfloor)-1}),\quad t\in [0,1],\quad n\in\mathbb{N},
\end{equation}
where $(\widetilde{Z}^{(j)}_k(m))_{k\geq 0}$ is the Galton--Watson process initiated by the $j$-th individual in the generation $m$. 
On the event $\{Z_n>0\}$,
$$
\frac{Z_{\lfloor nt\rfloor}}{Z_{S_{\vartheta(\lfloor nt\rfloor)-1}}}=\frac{\sum_{j=1}^{Z_{S_{\vartheta(\lfloor nt\rfloor)-1}}}\widetilde{Z}^{(j)}_{\lfloor nt\rfloor-S_{\vartheta(\lfloor nt\rfloor)-1}}(S_{\vartheta(\lfloor nt\rfloor)-1})}{Z_{S_{\vartheta(\lfloor nt\rfloor)-1}}},\quad t\in [0,1],\quad n\in\mathbb{N},
$$
and thereupon
\begin{equation*}
\sup_{t\in [0,1]}\frac{Z_{\lfloor nt\rfloor}}{Z_{S_{\vartheta(\lfloor nt\rfloor)-1}}}\leq \sup_{1\leq k\leq \vartheta(n)}\frac{\sum_{j=1}^{Z_{S_{k-1}}}\max_{0\leq i\leq d_k}\widetilde{Z}^{(j)}_{i}(S_{k-1})}{Z_{S_{k-1}}},\quad n\in\mathbb{N}.
\end{equation*}
Instead of~\eqref{eq:flt_proof51}, we shall prove 
a stronger relation
\begin{equation}\label{eq:flt_proof511}
\lim_{n\to\infty}\mathbb{P}\left\{ \sum_{1\leq k\leq \vartheta(n)}\frac{\sum_{j=1}^{Z_{S_{k-1}}}\max_{0\leq i\leq d_k}\widetilde{Z}^{(j)}_{i}(S_{k-1})}{Z_{S_{k-1}}}>\varepsilon n^3 \,\Big|\, Z_n>0\right\}=0.
\end{equation}
By Markov's inequality in combination with 
$\mathbb{P}\{Z_n>0\}\geq (1/C_5) 
n^{-1/2}$ for some $C_5>0$ and large $n$. 
\begin{align*}
&\hspace{-1cm}\mathbb{P}\left\{ \sum_{1\leq k\leq \vartheta(n)}\frac{\sum_{j=1}^{Z_{S_{k-1}}}\max_{0\leq i\leq d_k}\widetilde{Z}^{(j)}_{i}(S_{k-1})}{Z_{S_{k-1}}}>\varepsilon n^3 \,\Big|\, Z_n>0\right\}\\
&\leq \varepsilon^{-1} n^{-3}\sum_{k=1}^{\infty}\E \left( \1_{\{S_{k-1}\leq n\}}\frac{1}{Z_{S_{k-1}}}\sum_{j=1}^{Z_{S_{k-1}}}\max_{0\leq i\leq d_k}\widetilde{Z}^{(j)}_{i}(S_{k-1}) \,\Big|\, Z_{n}>0\right)\\
&\leq C_5 \varepsilon^{-1} n^{-5/2}\sum_{k=1}^{\infty}\E \left( \1_{\{S_{k-1}\leq n,\,Z_n>0\}}\frac{1}{Z_{S_{k-1}}}\sum_{j=1}^{Z_{S_{k-1}}}\max_{0\leq i\leq d_k}\widetilde{Z}^{(j)}_{i}(S_{k-1})\right)\\
&\leq C_5 \varepsilon^{-1} n^{-5/2}\sum_{k=1}^{\infty}\E \left( \1_{\{S_{k-1}\leq n,\,Z_{S_{k-1}}>0\}}\frac{1}{Z_{S_{k-1}}}\sum_{j=1}^{Z_{S_{k-1}}}\max_{0\leq i\leq d_k}\widetilde{Z}^{(j)}_{i}(S_{k-1})\right)\\
&=C_5 \varepsilon^{-1} n^{-5/2}\left(\E\max_{0\leq i\leq d}\widetilde{Z}_{i}\right)\sum_{k=1}^{\infty} \mathbb{P}\{S_{k-1}\leq n\}=O(n^{-3/2}),\quad n\to\infty.
\end{align*}
To justify the penultimate equality, observe that, given $(Z_{S_{k-1}},S_{k-1})$, the sequences 
$$
(\widetilde{Z}^{(1)}_i(S_{k-1}))_{i\geq 0},\ldots,(\widetilde{Z}^{(Z_{S_{k-1}})}_i(S_{k-1}))_{i\geq 0}
$$
are independent copies of the critical Galton--Watson process $(\widetilde{Z}_i)_{i\geq 0}$. The last equality is a consequence of 
Lemma~\ref{lem1} and the elementary renewal theorem which states that 
$$
\sum_{k=1}^{\infty} \mathbb{P}\{S_{k-1}\leq n\}=\E\vartheta (n)~\sim~\frac{n}{\m},\quad n\to\infty.
$$

We shall now check that
\begin{equation}\label{eq:flt_proof52}
\lim_{n\to\infty}\mathbb{P}\left\{ \inf_{t\in [0,1]}\frac{\log Z_{\lfloor nt\rfloor}-\log Z_{S_{\vartheta(\lfloor nt\rfloor)-1}}}{\mathfrak{v}\sqrt{\m^{-1}n}}<-\varepsilon \,\Big|\, Z_n>0\right\}=0.
\end{equation}
Using again decomposition~\eqref{eq:decomposition}, we write on the event $\{Z_n>0\}$
\begin{align*}
\inf_{t\in [0,1]}\frac{Z_{\lfloor nt\rfloor}}{Z_{S_{\vartheta(\lfloor nt\rfloor)-1}}}&=\inf_{t\in [0,1]}\frac{\sum_{j=1}^{Z_{S_{\vartheta(\lfloor nt\rfloor)-1}}}\widetilde{Z}^{(j)}_{\lfloor nt\rfloor-S_{\vartheta(\lfloor nt\rfloor)-1}}(S_{\vartheta(\lfloor nt\rfloor)-1})}{Z_{S_{\vartheta(\lfloor nt\rfloor)-1}}}\\&\geq \inf_{1\leq k\leq \vartheta(n)}\frac{\sum_{j=1}^{Z_{S_{k-1}}}\min_{0\leq i\leq d_k}\widetilde{Z}^{(j)}_i(S_{k-1})}{Z_{S_{k-1}}}\\
&\geq\inf_{1\leq k\leq \vartheta(n)}\frac{\sum_{j=1}^{Z_{S_{k-1}}}\1_{\{\widetilde{Z}^{(j)}_{d_k}(S_{k-1})>0\}}}{Z_{S_{k-1}}},\quad n\in\mathbb{N},
\end{align*}
As in the proof of~\eqref{eq:flt_proof51}, we shall prove a relation which is stronger 
than~\eqref{eq:flt_proof52}, namely,
\begin{equation}\label{eq:flt_proof521}
\lim_{n\to\infty}\mathbb{P}\left\{\inf_{1\leq k\leq \vartheta(n)}\frac{\sum_{j=1}^{Z_{S_{k-1}}}\1_{\{\widetilde{Z}^{(j)}_{d_k}(S_{k-1})>0\}}}{Z_{S_{k-1}}} < \varepsilon n^{-3} \,\Big|\, Z_n>0\right\}=0.
\end{equation}
Since $\mathbb{P}\{Z_{S_{\vartheta(n)}}>0\}\sim \mathbb{P}\{Z_n>0\}$ as $n\to\infty$, by 
~\eqref{eq:equivalence_two_sided_estimate}, and $\{Z_{S_{\vartheta(n)}}>0\}$ entails 
$\{Z_n>0\}$, relation~\eqref{eq:flt_proof521} is equivalent to
\begin{equation}\label{eq:flt_proof5211}
\lim_{n\to\infty}\mathbb{P}\left\{\inf_{1\leq k\leq \vartheta(n)}\frac{\sum_{j=1}^{Z_{S_{k-1}}}\1_{\{\widetilde{Z}^{(j)}_{d_k}(S_{k-1})>0\}}}{Z_{S_{k-1}}} < \varepsilon n^{-3} \,\Big|\, Z_{S_{\vartheta(n)}}>0\right\}=0.
\end{equation}
Observe 
that on the event $\{Z_{S_{\vartheta(n)}}>0\}$ 
$$
\sum_{j=1}^{Z_{S_{k-1}}}\1_{\{\widetilde{Z}^{(j)}_{d_k}(S_{k-1})>0\}}>0,\quad k\leq \vartheta(n),
$$
since otherwise the population does not survive up to time $S_{\vartheta(n)}$. Using this and 
the union bound yields
\begin{multline*}
\mathbb{P}\left\{\inf_{1\leq k\leq \vartheta(n)}\frac{1}{Z_{S_{k-1}}}\sum_{j=1}^{Z_{S_{k-1}}}\1_{\{\widetilde{Z}^{(j)}_{d_k}(S_{k-1})>0\}} < \varepsilon n^{-3} \,\Big|\, Z_{S_{\vartheta(n)}}>0\right\}\\
\leq \sum_{k\geq 1}\mathbb{P}\left\{0<\frac{1}{Z_{S_{k-1}}}\sum_{j=1}^{Z_{S_{k-1}}}\1_{\{\widetilde{Z}^{(j)}_{d_k}(S_{k-1})>0\}} < \varepsilon n^{-3},k\leq \vartheta(n) \,\Big|\, Z_{S_{\vartheta(n)}}>0\right\}.
\end{multline*}
Invoking 
$\mathbb{P}\{Z_{S_{\vartheta(n)}}>0\}\geq (1/C_6) 
n^{-1/2}$ for some $C_6>0$ and large $n$, we obtain, for such $n$,
\begin{align}
&\hspace{-1cm}\sum_{k\geq 1}\mathbb{P}\left\{0<\frac{1}{Z_{S_{k-1}}}\sum_{j=1}^{Z_{S_{k-1}}}\1_{\{\widetilde{Z}^{(j)}_{d_k}(S_{k-1})>0\}} < \varepsilon n^{-3},k\leq \vartheta(n) \,\Big|\, Z_{S_{\vartheta(n)}}>0\right\}\notag\\
&\leq C_6 
n^{1/2}\sum_{k\geq 1}\mathbb{P}\left\{0<\frac{1}{Z_{S_{k-1}}}\sum_{j=1}^{Z_{S_{k-1}}}\1_{\{\widetilde{Z}^{(j)}_{d_k}(S_{k-1})>0\}} < \varepsilon n^{-3},k\leq \vartheta(n), Z_{S_{\vartheta(n)}}>0\right\}\notag\\
&= C_6 
n^{1/2}\sum_{k\geq 1}\mathbb{P}\left\{0<\frac{1}{Z_{S_{k-1}}}\sum_{j=1}^{Z_{S_{k-1}}}\1_{\{\widetilde{Z}^{(j)}_{d_k}(S_{k-1})>0\}} < \varepsilon n^{-3},S_{k-1}\leq n, Z_{S_{\vartheta(n)}}>0\right\}\notag\\
&\leq C_6 
n^{1/2}\sum_{k\geq 1}\mathbb{P}\left\{0<\frac{1}{Z_{S_{k-1}}}\sum_{j=1}^{Z_{S_{k-1}}}\1_{\{\widetilde{Z}^{(j)}_{d_k}(S_{k-1})>0\}} < \varepsilon n^{-3},S_{k-1}\leq n, Z_{S_{k-1}}>0\right\}\label{eq:flt_proof52111}.
\end{align}
Let $\widetilde{p}:=\mathbb{P}\{\widetilde{Z}_{d}>0\}$ be the probability of the event that the critical Galton--Watson process $(\widetilde{Z}_k)_{k\geq 0}$ survives up to random time $d$ independent of $(\widetilde{Z}_k)_{k\geq 0}$. Obviously,
$\widetilde{p}\in (0,1)$. Given $(S_{k-1},Z_{S_{k-1}})$, the sum
$$
\sum_{j=1}^{Z_{S_{k-1}}}\1_{\{\widetilde{Z}^{(j)}_{d_k}(S_{k-1})>0\}}
$$
has a binomial distribution with parameters $(Z_{S_{k-1}},\widetilde{p})$.

In what follows 
we denote by ${\rm Bin}(N,p)$ a random variable with a binomial distribution with $N$ and $p$ interpreted as 
the number of independent trials and 
a success probability, respectively. 
The next lemma provides a uniform in $N$ estimate for $\mathbb{P}\{0 < N^{-1}{\rm Bin}(N,p)\leq x\}$, which is useful when $x$ is close to zero.

\begin{lemma}\label{lem:1}
For all $N\in\mathbb{N}$ and $x\in (0,p)$, 
$$
\mathbb{P}\{0<N^{-1}{\rm Bin}(N,p)\leq x\}\leq \frac{p(1-p)x}{(p-x)^2}. 
$$
\end{lemma}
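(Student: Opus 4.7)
The plan is to reduce the probability to a first moment by exploiting that the binomial random variable is integer-valued and at least $1$ on the event in question, and then apply Chebyshev's inequality to the lower-tail event $\{S\le Nx\}$. I write $S\sim{\rm Bin}(N,p)$, so that $\E S=Np$ and $\Var (S)=Np(1-p)$.

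First I would observe that every index $k$ in the sum $\mathbb{P}\{1\le S\le Nx\}=\sum_{k=1}^{\lfloor Nx\rfloor}\mathbb{P}\{S=k\}$ satisfies $k\ge 1$, which upgrades each probability to a first moment via the crude inequality $\mathbb{P}\{S=k\}\le k\,\mathbb{P}\{S=k\}$. This yields
$$
\mathbb{P}\{0<N^{-1}S\le x\}\le\sum_{k=1}^{\lfloor Nx\rfloor}k\,\mathbb{P}\{S=k\}=\E[S\,\1_{\{1\le S\le Nx\}}]\le Nx\cdot\mathbb{P}\{S\le Nx\},
$$
where the final step simply uses $S\le Nx$ on the support of the indicator. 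Next, because $x<p$, the event $\{S\le Nx\}$ is contained in $\{|S-Np|\ge N(p-x)\}$, so Chebyshev's inequality gives
$$
\mathbb{P}\{S\le Nx\}\le\frac{\Var (S)}{N^2(p-x)^2}=\frac{p(1-p)}{N(p-x)^2}.
$$
Multiplying the two bounds cancels $N$ and produces the advertised estimate $p(1-p)x/(p-x)^2$.

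There is essentially no obstacle — the argument is a textbook pairing of a first-moment upgrade with a one-sided Chebyshev bound. The only point worth emphasising is that the crucial factor of $x$ in the numerator, which is precisely what will be needed when the lemma is applied with $x$ of order $n^{-3}$ in the sum in~\eqref{eq:flt_proof52111}, is produced solely by the inequality $\mathbb{P}\{S=k\}\le k\,\mathbb{P}\{S=k\}$ valid for $k\ge 1$; Chebyshev alone would deliver only $p(1-p)/(N(p-x)^2)$, which lacks the saving factor $x$ and would be too weak for the subsequent summation over $k$.
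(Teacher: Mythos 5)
Your proof is correct and follows essentially the same route as the paper: a Chebyshev bound on the lower tail $\{S\le Nx\}$ yielding $p(1-p)/(N(p-x)^2)$, combined with the integer-valuedness of the binomial to produce the extra factor $x$. The only cosmetic difference is how that factor is extracted: you weight each atom by $k\ge 1$ to gain a factor $Nx$, whereas the paper simply notes the probability vanishes when $x<1/N$ and otherwise bounds $1/N\le x$; both are equivalent uses of the same observation.
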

\begin{proof}
Plainly, 
$\mathbb{P}\{0<N^{-1}{\rm Bin}(N,p)\leq x\}=0$ if $x<1/N$. If $x\geq 1/N$, then by Chebyshev's inequality
\begin{multline*}
\mathbb{P}\{0<N^{-1}{\rm Bin}(N,p)\leq x\}\leq \mathbb{P}\{{\rm Bin}(N,p)\leq Nx\}\\
= \mathbb{P}\{{\rm Bin}(N,1-p)-N(1-p)\geq N(p-x)\}
\leq 
\frac{p(1-p)}{(p-x)^2}\frac{1}{N}\leq \frac{p(1-p)}{(p-x)^2}x.
\end{multline*}
\end{proof}
Using Lemma \ref{lem:1} 
we estimate the summands in~\eqref{eq:flt_proof52111} as follows. For $k\geq 1$ and $n$ large enough,
\begin{align*}
&\hspace{-0.5cm}\mathbb{P}\left\{0<\frac{1}{Z_{S_{k-1}}}\sum_{j=1}^{Z_{S_{k-1}}}\1_{\{\widetilde{Z}^{(j)}_{d_k}(S_{k-1})>0\}} < \varepsilon n^{-3},S_{k-1}\leq n, Z_{S_{k-1}}>0\right\}\\
&=\mathbb{P}\left\{0<\frac{1}{Z_{S_{k-1}}}\sum_{j=1}^{Z_{S_{k-1}}}\1_{\{\widetilde{Z}^{(j)}_{d_k}(S_{k-1})>0\}} < \varepsilon n^{-3} \,\Big|\, S_{k-1}\leq n, Z_{S_{k-1}}>0\right\}\mathbb{P}\{S_{k-1}\leq n, Z_{S_{k-1}}>0\}\\
&\leq \frac{\widetilde{p}(1-\widetilde{p})\varepsilon}{(\widetilde{p}-\varepsilon n^{-3})}n^{-3}\mathbb{P}\{S_{k-1}\leq n, Z_{S_{k-1}}>0\}\leq \frac{\widetilde{p}(1-\widetilde{p})\varepsilon}{(\widetilde{p}-\varepsilon n^{-3})}n^{-3}\mathbb{P}\{S_{k-1}\leq n\}.
\end{align*}
Summarizing, the probability on the left-hand side of~\eqref{eq:flt_proof5211} is bounded from above by
$$
C_6 
n^{1/2}\frac{\widetilde{p}(1-\widetilde{p})\varepsilon}{(\widetilde{p}-\varepsilon n^{-3})}n^{-3}\sum_{k\geq 1}\mathbb{P}\{S_{k-1}\leq n\}=O(n^{-3/2}),\quad n\to\infty,
$$
thereby finishing the proof of~\eqref{eq:flt_proof5211} and 
Theorem~\ref{thm:flt}.

\bigskip

\noindent {\bf Acknowledgement} The research was supported by the High Level Talent Project DL2022174005L of Ministry of Science and Technology of PRC.

\bibliographystyle{plain}
\bibliography{BuraDongIksMar130323}

\end{document}